\def\R{\mathbb{R}}
\def\D{\mathbb{D}}
\def\S{\mathbb{S}}
\newtheorem{theorem}{Theorem}[section]
\newtheorem{lemma}[theorem]{Lemma}
\newtheorem{corollary}[theorem]{Corollary}
\theoremstyle{definition}
\newtheorem{remark}[theorem]{Remark}
\newtheorem{definition}[theorem]{Definition}
\title[Contact Symplectic Fibrations and Fiber Connected Sum]
{Contact Symplectic Fibrations and \\ Fiber Connected Sum}
\author{Mehmet Firat Arikan}
\address{Dept. of Mathematics, Middle East Technical University, Ankara, TURKEY}
\email{farikan@metu.edu.tr}
\thanks{The author is partially supported by TUBITAK grant 1109B321200181 and also by NSF FRG grant DMS-1065910}
\subjclass[2010]{58D27,  58A05, 57R65}
\keywords{Contact structure, fiber bundle, symplectic fibration, fiber connected sum}
\begin{document}
\begin{abstract}
We consider certain type of fiber bundles with odd dimensional compact contact base, exact symplectic fibers, and the structure group contained in the group of exact symplectomorphisms of the fiber. We call such fibrations ``contact symplectic fibrations''. By a result of Hajduk-Walczak, some of these admit contact structures which are ``compatible'' (in a certain sense) with the corresponding fibration structures. We show that this result can be extended to get a compatible contact structure on any contact symplectic fibration, and also that isotopic contact structures on the base produce isotopic contact structures on the total space. Moreover, we prove that the fiber connected summing of two contact symplectic fibrations along their fibers results in another contact symplectic fibration which admits a compatible contact structure agreeing with the original ones away from the region where we take fiber connected sum, and whose base is the contact connected sum of the original contact bases.

\end{abstract}

\maketitle

%
%
\section{Introduction}

In what follows, all manifolds, maps and fiber bundle structures will be assumed to be differentiable. Suppose that $E$ is any locally trivial fiber bundle such that the fibers are of even dimension but the base space $B$ could have dimension in either parity. Let $\pi:E \twoheadrightarrow B$ be the fibration map and $G \subset \textrm{Diff}(F)$ the structure group. Let us denote such a fiber bundle by the symbol
\begin{equation} \label{eqn:fiber_bundle}
(F\hookrightarrow E \stackrel{\pi}{\twoheadrightarrow} B)_G.
\end{equation}
If there exist a symplectic structure $\omega$ on the fiber $F$ and all transition maps are symplectomorphisms of $(F,\omega)$ (i.e., $G \subset \textrm{Symp}(F,\omega)$), then the fiber bundle in (\ref{eqn:fiber_bundle}) is called a \emph{symplectic fibration}. Note that, for a symplectic fibration, each fiber $F_b:=\pi^{-1}(b)\approx F$ is equipped with a symplectic structure $\omega_p$ obtained from pulling back $\omega$ using a local trivialization around $b \in B$, and this construction is independent of the choosen trivialization as transition maps lies in $\textrm{Symp}(F,\omega)$. \\

In order to motivate the reader, let's recall some related work in even dimensions. Given a symplectic fibration as above suppose that the total space $E$ admits a symplectic form $\Omega$. (In particular, $B$ is assumed to be even dimensional.) Then $\Omega$ is said to be \emph{compatible with} the fibration $\pi:E \twoheadrightarrow B$ if it restricts to a symplectic form on every fiber $F_b$. Note that the compatibility condition implies that $(F_b,\omega_b)$ symplectically embeds into $(E,\Omega)$ for each $b\in B$. This definition is motivated by the result which states that on a locally trivial fiber bundle as in (\ref{eqn:fiber_bundle}) with a connected base if $E$ admits a symplectic form $\Omega$ which restricts to a symplectic form on every fiber, then $\pi:E \twoheadrightarrow B$ is a symplectic fibration compatible with $\Omega$. Conversely, due to a result of Thurston, for a symplectic fibration with a connected symplectic base if the symplectic form $\omega_b$ represents a cohomology class in $H^2(E)$, then one can construct a symplectic form on $E$ which is compatible with $\pi$. For more details about symplectic topology and symplectic fibrations with even dimensional base, we refer the reader \cite{MS}.\\

In this note, we consider similar approaches to build ``compatible'' contact structures on the total spaces of exact symplectic fibrations (see below for definitions).  In particular, the base spaces will be odd dimensional and admit contact structures. Here we only consider co-oriented contact structures, that is, those which appear as kernels of certain globally defined $1$-forms: A \emph{positive contact form} on a smooth oriented $(2n+1)$-dimensional manifold $M$
is a $1$-form $\alpha$ such that $\alpha \wedge (d\alpha)^n > 0$ (i.e., $\alpha \wedge (d\alpha)^n$ is a positive volume form on $M$). The hyperplane field (of rank $2n$) $\xi=\textrm{Ker} (\alpha)$ of a contact form $\alpha$ is called a \emph{positive (co-oriented) contact structure} on $M$. The pair $(M,\xi)$ (or sometimes $(M,\alpha)$) is called a \emph{contact manifold}. We say that two contact manifolds $(M,\xi)$ and $(M',\xi')$ are \emph{contactomorphic} if there exists a
diffeomorphism $f:M\longrightarrow M'$ such that $f_\ast(\xi)=\xi'$. Two contact structures $\xi_0, \xi_1$ on a $M$ are said to be \emph{isotopic} if there exists a $1$-parameter family $\xi_t$ ($0\leq t\leq 1$) of contact structures joining them (Gray's Stability). Note that isotopic contact structures give contactomorphic contact manifolds by Gray's Theorem. More details about contact topology can be found in \cite{Ge}.\\

Through out the paper, we will assume the fibers of any fiber bundle to be noncompact or compact with nonempty boundaries, (and, therefore, the symplectic structures on them will be exact). In Section \ref{sec:Contact_Symplectic_fibrations}, we will show the existence of a compatible contact structure on a fiber bundle with exact symplectic fibers, compact contact base and structure group contained in the group of exact symplectomorphisms of the fiber. One should note that Hajduk-Walczak \cite{HW} showed the existence of contact structures on some of such fiber bundles which are covered by the corresponding theorem of the present paper. Therefore, one can say that the related results of this note generalize the corresponding statements in \cite{HW}. After constructing contact structures on total spaces of bundles, in Section \ref{sec:Bundle_Contact_Structures} we also discuss how they are affected if we vary the contact structures on the bases in their isotopy classes.\\

Fiber sum operation on fiber bundles or more generally fiber connected summing of manifolds along their submanifolds is a well known technique to obtain new fiber bundles and manifolds out of given ones. The results from \cite{Gompf} and \cite{MW} show that under suitable conditions, fiber sum process can be done in the symplectic category. Geiges \cite{Geiges} proved that an analog result holds also in contact category under certain conditions. In Section \ref{sec:Fiber_connected_sum}, by following the fiber connected sum description given in \cite{Geiges} we will see that under suitable assumptions one can take the fiber connected sum of two fiber bundles (equipped with compatible contact structures) along their fibers which results in another fiber bundle whose total space admiting a compatible contact structure which agrees with the original ones away from the region where we take fiber connected sum, and whose base is the contact connected sum of the original contact bases.

\medskip \noindent {\em Acknowledgments.\/} The author would like to
thank T\"UB\.ITAK, The Scientific and Technological Research Council of Turkey, for supporting this research, and also Y\i ld\i ray Ozan for helpful conversations.

%
%

\section{Contact Symplectic Fibrations} \label{sec:Contact_Symplectic_fibrations}

A contact manifold $(Y,\textrm{Ker}(\beta))$ is called \emph{strongly symplectically filled} by a symplectic manifold $(F,\omega)$ if there exists a Liouville vector field $\chi$ of $\omega$ defined (at least) locally near $\partial F=Y$ such that $\chi$ is transversally pointing out from $Y$ and $\iota_{\chi}\omega=\beta$ on $Y$. In such a case, we also say that $(Y,\textrm{Ker} (\beta))$ is the \emph{convex boundary} of $(F,\omega)$. \\

An \emph{exact symplectic manifold} is a noncompact manifold $F$ (or a compact manifold with boundary), together with a symplectic form $\omega$ and a $1$-form $\beta$ satisfying $\omega=d\beta$. If $F$ is a compact manifold with boundary, then we also require that the $\omega$-dual vector field $\chi$ of $\beta$ defined by $\iota_\chi \omega=\beta$ should point strictly outwards along $\partial F$, in particular, $\beta|_{\partial F}$ is a contact form which makes $\partial F$ the convex boundary of $F$. Since $\beta$ determines both $\omega$ and $\chi$, it suffices to write an exact symplectic manifold as a pair $(F,\beta)$. The $1$-form $\beta$ is called an \emph{exact symplectic structure} on $F$. Since any symplectic form determines an orientation, any exact symplectic manifold $(F,\beta)$ is (symplectically) oriented. An \emph{exact symplectomorphism} $\varphi:(F_1,\beta_1)\to (F_2,\beta_2)$ between two exact symplectic manifolds  is a diffeomorphism such that $\varphi^*(\beta_2)-\beta_1$ is exact. \\

Let us start with defining the analog of the above compatibility notion (given in the introduction) in odd dimensions.

\begin{definition} \label{def:compatibility}
Let $(F\hookrightarrow E \stackrel{\pi}{\twoheadrightarrow} B)_G$ be a fiber bundle with odd dimensional base such that the total space $E$ admits a contact structure $\xi$. Suppose that the fiber space $F$ is a noncompact manifold or a compact manifold with boundary. Then $\xi$ is said to be \emph{compatible with} $\pi$ if there exists a contact form $\alpha$ for $\xi$ such that $\alpha$ restricts to an exact symplectic structure on every fiber of $\pi$.
\end{definition}

Given an exact symplectic manifold $(F,\beta)$, one can define the group $$\textrm{Exact}(F,\beta):=\{\phi \in \textrm{Diff}(F) \,|\, \phi^*(\beta)-\beta \textrm{ exact}\}.$$ of exact symplectomorphisms which is a subgroup of the symplectic group $$\textrm{Symp}(F,d\beta)=\{\phi \in \textrm{Diff}(F) \,|\, \phi^*(d\beta)=d\beta\}.$$ Also if $\partial F \neq \emptyset$, one can consider the subgroup $$\textrm{Exact}(F,\partial F,\beta) \subset \textrm{Exact}(F,\beta)$$ of exact symplectomorphisms which are identity near $\partial F$.\\

We can now formally introduce our main objects of interest:

\begin{definition} \label{def:contact_symplectic_fibration}
A \emph{contact symplectic fibration} is a fiber bundle $$(F\hookrightarrow E \stackrel{\pi}{\twoheadrightarrow} B)_G$$
with exact symplectic fiber $(F,\beta)$ and the structure of group of exact symplectomorphisms (i.e., $G \subset \textrm{Exact}(F,\beta)$) such that the base space $B$ is compact and admits a co-oriented contact structure.
\end{definition}

From the definition one might understand that the phrase ``contact symplectic fibration'' refers to a fiber bundle with a compact contact base and an exact symplectic fibers, and also that the adjective ``contact'' in the phrase only emphasizes that the base space is contact. However, the main result of this section  (Theorem \ref{thm:Existence_on_contact_symp_fib}) shows that the word ``contact'' has, indeed, a more global meaning. \\

Given a fiber bundle $(F\hookrightarrow E \stackrel{\pi}{\twoheadrightarrow} B)_G$, if the base and/or fiber spaces are manifolds with boundaries, then the boundary of $E$ consists of two parts: The \emph{vertical boundary component} $\partial_v E:=\pi^{-1}(\partial B)$, and the \emph{horizontal boundary component} $\partial_h E:=\bigcup_{b \in B} \partial E_b$ where $E_b=\pi^{-1}(b)\cong F$ is the fiber over $b \in B$.

\begin{theorem} \label{thm:Existence_on_contact_symp_fib}
The total space of a contact symplectic fibration $(F\hookrightarrow E \stackrel{\pi}{\twoheadrightarrow} B)_G$ admits a (co-oriented) contact structure $\emph{Ker}(\sigma)$ which is compatible with $\pi$. Moreover, If $F$ has a boundary and $G \subset \emph{Exact}(F,\partial F,\beta)$, then the contact form $\sigma$ is equal to a product form on some collar neighborhood of $\partial_hE \cong B \times \partial F$.
\end{theorem}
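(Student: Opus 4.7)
The plan is a Thurston-type construction: first build a global $1$-form $\tau$ on $E$ whose restriction to every fiber is an exact symplectic primitive of a fiberwise symplectic form, and then form $\sigma_C := C\,\pi^*\alpha_B + \tau$ for a contact form $\alpha_B$ on $B$ and a large constant $C>0$ to be chosen. Compatibility with $\pi$ will be automatic, since $\pi^*\alpha_B$ vanishes on the vertical subspace and so $\sigma_C|_{E_b} = \tau|_{E_b}$, leaving the main work in verifying the contact condition $\sigma_C \wedge (d\sigma_C)^{k+n} > 0$, where $2k+1 = \dim B$ and $2n = \dim F$.

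To build $\tau$, use compactness of $B$ to pick a finite trivializing cover $\{U_i\}$; on each $\pi^{-1}(U_i) \cong U_i \times F$ let $\tau_i$ be the pullback of $\beta$ via the fiber projection. Since the transition maps lie in $\mathrm{Exact}(F,\beta)$, the differences $\tau_j - \tau_i$ restrict to exact $1$-forms on every fiber. Choose a partition of unity $\{\rho_i\}$ on $B$ subordinate to $\{U_i\}$ and set $\tau := \sum_i \rho_i \tau_i$. Working in a single trivialization at a base point $b$, one writes $\tau|_{E_b} = \beta + dh_b$ where $h_b$ is a convex combination of the primitives produced by the exact-symplectomorphism condition, so $\tau|_{E_b}$ is an exact symplectic primitive with $d(\tau|_{E_b}) = d\beta$.

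Next expand $(d\sigma_C)^{k+n} = (C\pi^*d\alpha_B + d\tau)^{k+n}$ binomially. Because $\pi^*d\alpha_B$ is the pullback of a $2$-form from the $(2k+1)$-manifold $B$, $(\pi^*d\alpha_B)^{k+1} \equiv 0$ on $E$; so the highest power of $C$ that survives in $\sigma_C \wedge (d\sigma_C)^{k+n}$ is $C^{k+1}$, with coefficient
\[
\binom{k+n}{k}\,\pi^*(\alpha_B \wedge d\alpha_B^k)\wedge (d\tau)^n.
\]
The factor $\pi^*(\alpha_B\wedge d\alpha_B^k)$ is horizontal of top base-degree $2k+1$, so only the purely vertical part of $(d\tau)^n$ survives the wedge; and via the identity $d(\tau|_{E_b}) = (d\tau)|_{E_b}$, this purely vertical part equals $(d\beta)^n$ on each fiber, a positive volume form. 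Hence the leading coefficient is pointwise a strictly positive volume form on $E$, and the remaining finitely many terms of strictly lower order in $C$ can be uniformly dominated using compactness of $B$, so for $C$ sufficiently large $\sigma_C$ is a positive contact form. Set $\sigma := \sigma_C$.

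For the moreover part, the hypothesis $G \subset \mathrm{Exact}(F,\partial F,\beta)$ means all transitions are the identity on a uniform collar $\mathcal N$ of $\partial F$, so the local $\tau_i$'s literally agree on $\pi^{-1}(U_i\cap U_j)\cap(B\times\mathcal N)$; thus $\tau = (\sum_i \rho_i)\tau_i$ is globally the pullback of $\beta$ under the naturally defined fiber projection there, and $\sigma_C = C\pi^*\alpha_B + \pi_F^*\beta$ is a product form on the collar $B \times \mathcal N$ of $\partial_h E$. I expect the main obstacle to be the contact-condition step: one must correctly isolate the purely vertical part of $(d\tau)^n$, since the partition-of-unity smoothing introduces spurious mixed-type components of $d\tau$ that must be shown to cancel after wedging with the top-degree horizontal factor, and one must keep uniform control over the finitely many lower-order-in-$C$ correction terms (immediate over compact $B$, and requiring at most a brief remark when the fiber $F$ is noncompact since $\tau$ and its derivatives are built from pointwise-bounded local models).
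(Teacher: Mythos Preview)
Your argument is correct and is precisely the partition-of-unity Thurston-type construction that the paper itself sketches in the Remark immediately preceding its proof: build a global fiberwise primitive $\tau$ (your $\tau$, the paper's $\lambda$) and then verify that $C\pi^*\alpha_B+\tau$ is contact for large $C$. Your identification of the leading $C^{k+1}$ term and the reason it is a positive volume form are exactly right, and your treatment of the ``moreover'' clause matches the paper's.

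The paper's \emph{main} proof, however, deliberately takes a more hands-on route: instead of a partition of unity, it chooses a finite partition $B=W^1\cup\cdots\cup W^r$ into compact pieces meeting along hypersurfaces $H_i$, and glues the product forms $K\mu+\beta$ across each $H_i$ by inserting an explicit correction $f_i\,d\Psi_i$ supported in a tubular neighborhood (here $f_i$ is a one-variable cut-off and $\Psi_i$ packages the primitives of $\phi^*\beta-\beta$). The resulting contact form has the very concrete shape $\sigma=K\mu+\beta+f\,d\Psi$, and this explicit description is what the paper needs afterwards: it is used verbatim to \emph{define} the ``bundle contact structure'' $\xi(\mu)$ in Section~3, to run the isotopy argument of Theorem~3.2, and to perform the local contact computation near a fiber in the fiber-connected-sum theorem. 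Your partition-of-unity $\tau$ would serve equally well for the bare existence statement, but would require rewriting those later arguments.

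One small point worth noting: both your argument and the paper's are casual about the noncompact-fiber case when asserting that a single $C$ (or $K$) dominates the lower-order terms globally. Your remark that $\tau$ is ``built from pointwise-bounded local models'' is not quite enough as stated, since the primitives $\psi_{ij}$ and the horizontal components of $\tau_j-\tau_i$ can be unbounded on a noncompact $F$; what actually saves the day is that every lower-order term, when divided by the leading volume form, is a smooth function on $E$ that factors through finitely many transition data over the compact base, so one can bound it trivialization by trivialization. The paper does not spell this out either.
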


Note that this theorem can be regarded as the analog of Thurston's theorem mentioned above in odd dimensions. Before proving the theorem, let us see some examples of contact symplectic fibrations appeared under a different name in the literature. To this end, we need to recall exact symplectic fibrations from \cite{S1,S2}. Here for our purposes we define them in a slightly different way.

\begin{definition} \label{def:Exact_Symplectic_Fibration}
An \emph{exact symplectic fibration} is a fiber bundle $$(F\hookrightarrow (E,\lambda) \stackrel{\pi}{\twoheadrightarrow} B)_G$$ equipped with a $1$-form $\lambda$ on $E$ such that each fiber $E_b$ with $\lambda_b=\lambda|_{E_b}$ is an exact symplectic manifold. An exact symplectic fibration is said to be \emph{trivial near horizontal boundary} if $\partial F \neq \emptyset$ and the following triviality condition near $\partial_h E$ is satisfied: Choose a point $b \in B$ and consider the trivial fibration $\tilde{\pi} : \tilde{E}:= B \times E_b \to B$ with the form $\tilde{\lambda}$ which is the pullback of $\lambda_b$, respectively. Then there should be a fiber-preserving diffeomorphism $\Upsilon:N \to \tilde{N}$ between neighborhoods $N$ of $\partial_h E$ in $E$ and $\tilde{N}$ of $\partial_h \tilde{E}$ in $\tilde{E}$ which maps $\partial_h E$ to $\partial_h \tilde{E}$, equals the identity on $N \cap E_b$, and $$\Upsilon ^* \tilde{\lambda}=\lambda.$$
\end{definition}

Lemma 1.1 of \cite{S2} implies that for an exact symplectic fibration $$(F\hookrightarrow (E,\lambda) \stackrel{\pi}{\twoheadrightarrow} B)_G,$$ the structure group $G$ falls into $\textrm{Exact}(F,\beta)$ where $\beta=\lambda_b$ is the restriction of the $1$-form $\lambda$ on $E$ to any fixed fiber $F=E_b$. The same result also implies that if we have an exact symplectic fibration which is trivial near horizontal boundary, then its structure group is contained in $\textrm{Exact}(F,\partial F,\beta)$. Also as discussed after Lemma 1.1, using the fact that $d\lambda_b$ is nondegenerate on $E_b$ for any $b \in B$, one can define a preferred (or compatible) connection on any exact symplectic fibration. Moreover, one can proceed in the other direction as well, that is, any fiber bundle with structure group contained in $\textrm{Exact}(F,\beta)$ (resp. $\textrm{Exact}(F,\partial F,\beta)$) and equipped with a compatible connection admits a structure of exact symplectic fibration (resp. a structure of exact symplectic fibration which is trivial near horizontal boundary). Putting these observations together for exact symplectic fibrations we obtain

\begin{lemma} \label{lem:Bundle_Structure}
Any exact symplectic fibration $(F\hookrightarrow (E,\lambda) \stackrel{\pi}{\twoheadrightarrow} B)_G$ is equipped with a fiber bundle structure with transition maps contained in $\emph{Exact}(F,\beta)$, that is, $G \subset \emph{Exact}(F,\beta)$, where $\beta=\lambda|_F=\lambda_b$ is the restriction of $\lambda$ to any fixed fiber $F=E_b$. If it is trivial near horizontal boundary, then $G \subset \emph{Exact}(F,\partial F,\beta)$. \qed
\end{lemma}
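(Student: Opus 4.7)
My plan is to exhibit an atlas of local trivializations whose transition maps are automatically exact symplectomorphisms, by using the canonical symplectic connection on $\pi$ furnished by $\lambda$. The nondegeneracy of $d\lambda$ along fibers lets me define a horizontal distribution $H\subset TE$ by
$$H_p = \{ X \in T_pE : (\iota_X d\lambda)|_{T_p E_{\pi(p)}} = 0\}.$$
Since $d\lambda|_{E_{\pi(p)}}$ is symplectic on the vertical subspace $V_p := T_pE_{\pi(p)}$, one gets $T_pE = V_p \oplus H_p$ with $d\pi|_{H_p}$ an isomorphism onto $T_{\pi(p)}B$; thus $H$ is a connection whose parallel transport covers paths downstairs and carries fibers diffeomorphically onto fibers.

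The key calculation is that this parallel transport produces exact symplectomorphisms. For a vector field $X$ on $B$ with horizontal lift $\widetilde X$, the flow $\Phi_t$ of $\widetilde X$ covers the flow $\Phi^X_t$ of $X$, and Cartan's formula gives $\mathcal L_{\widetilde X}\lambda = d(\iota_{\widetilde X}\lambda) + \iota_{\widetilde X}d\lambda$. Restricting to any fiber $E_b$, the second term vanishes because $\widetilde X \in H$, while the first equals the fiberwise differential $d_{E_b}(\iota_{\widetilde X}\lambda|_{E_b})$. Integrating yields
$$(\Phi_t|_{E_b})^*\lambda_{\Phi^X_t(b)} - \lambda_b \;=\; d_{E_b}\,h_{b,t}$$
for a smooth function $h_{b,t}$ on $E_b$, so parallel transport along any smooth path $\gamma$ in $B$ is an exact symplectomorphism from $(E_{\gamma(0)},\lambda_{\gamma(0)})$ to $(E_{\gamma(1)},\lambda_{\gamma(1)})$.

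To deduce the first statement, fix $b_0\in B$, put $F = E_{b_0}$, $\beta = \lambda_{b_0}$, cover $B$ by contractible open sets $U_\alpha$ each containing a basepoint $b_\alpha$ together with a chosen path $\eta_\alpha$ from $b_0$ to $b_\alpha$, and for each $b\in U_\alpha$ pick a smoothly varying path from $b_\alpha$ to $b$ inside $U_\alpha$. Concatenating with $\eta_\alpha$ produces a smoothly varying parallel transport $P_{\alpha,b}\colon F\to E_b$, and I set $\varphi_\alpha(p) = (\pi(p),P_{\alpha,\pi(p)}^{-1}(p))$. The transition map $g_{\alpha\beta}(b) = P_{\alpha,b}^{-1}\circ P_{\beta,b}$ is parallel transport around a loop based at $b_0$, hence an exact symplectomorphism of $(F,\beta)$ by the previous paragraph, giving $G\subset\textrm{Exact}(F,\beta)$. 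For the boundary refinement, in the $\Upsilon$-coordinates on a collar $N$ of $\partial_h E$ the form $\lambda$ is pulled back from $\lambda_b$ on $E_b$, so it has no $B$-component and the horizontal distribution $H$ coincides with $TB$ there; consequently parallel transport along any path in $B$ acts trivially on $N$ under $\Upsilon$. Arranging the $\varphi_\alpha$ to coincide with $\Upsilon$ on a slightly smaller collar makes each $g_{\alpha\beta}(b)$ the identity near $\partial F$, placing it in $\textrm{Exact}(F,\partial F,\beta)$.

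The main obstacle I anticipate is the patching in the boundary case: one must verify that the $\lambda$-induced connection restricts on the collar $N$ to the product connection coming from $\Upsilon$, so that the parallel-transport trivializations from the previous paragraph can be blended with $\Upsilon$ without changing the structure-group class. Once this is confirmed via the vanishing of the mixed components of $d\lambda$ under $\Upsilon$, the rest of the argument is a routine check that the cocycle conditions remain satisfied for the smoothed trivializations.
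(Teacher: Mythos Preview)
Your argument is correct and is essentially the same as the paper's. The paper does not write out a proof of this lemma; it simply records it as a consequence of Lemma~1.1 of \cite{S2} and the discussion following it, where the preferred connection on an exact symplectic fibration is defined via the fiberwise nondegeneracy of $d\lambda$ and parallel transport is shown to act by exact symplectomorphisms. What you have done is spell out precisely that argument: you construct the horizontal distribution $H$ from $d\lambda$, verify via Cartan's formula that parallel transport pulls $\lambda_{\gamma(1)}$ back to $\lambda_{\gamma(0)}$ plus an exact term, and then build local trivializations from parallel transport so that the transition cocycles land in $\textrm{Exact}(F,\beta)$; in the trivial-near-boundary case you observe that under $\Upsilon$ the connection is the product connection on the collar, forcing the transition maps to be the identity near $\partial F$. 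This is exactly the content the paper is invoking from \cite{S2}, so your proposal aligns with the paper's own (cited) proof.
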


As an immediate consequence of this lemma, we have

\begin{corollary}
Any exact symplectic fibration with a compact base which admits a co-oriented contact structure can be equipped with a structure of a contact symplectic fibration. \qed
\end{corollary}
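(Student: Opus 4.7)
The plan is simply to verify the three conditions in Definition \ref{def:contact_symplectic_fibration} one by one, using Lemma \ref{lem:Bundle_Structure} for the only nontrivial one. Given an exact symplectic fibration $(F\hookrightarrow (E,\lambda) \stackrel{\pi}{\twoheadrightarrow} B)_G$ whose compact base $B$ admits a co-oriented contact structure, I would first fix any base point $b \in B$ and set $\beta := \lambda|_{E_b}$; by the definition of exact symplectic fibration, $(E_b,\beta)$ is an exact symplectic manifold and we may identify it with the model fiber $(F,\beta)$. This takes care of the ``exact symplectic fiber'' requirement.

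For the structure group condition, I would simply invoke Lemma \ref{lem:Bundle_Structure}, which states precisely that any exact symplectic fibration comes equipped with a fiber bundle structure whose transition maps satisfy $G \subset \textrm{Exact}(F,\beta)$. The last requirement, that $B$ be compact and carry a co-oriented contact structure, is directly built into the hypotheses of the corollary.

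Since the three defining conditions of a contact symplectic fibration are thus met, the same underlying fiber bundle $(F \hookrightarrow E \stackrel{\pi}{\twoheadrightarrow} B)_G$, viewed together with the contact base structure and with $(F,\beta)$ as exact symplectic fiber, is a contact symplectic fibration. There is no real obstacle here; the content of the statement is entirely bundled into Lemma \ref{lem:Bundle_Structure}, and the corollary only records that the additional assumption of a co-oriented contact structure on the (compact) base turns an exact symplectic fibration into an instance of Definition \ref{def:contact_symplectic_fibration}.
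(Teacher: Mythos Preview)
Your proof is correct and matches the paper's approach exactly: the paper presents this corollary as an immediate consequence of Lemma~\ref{lem:Bundle_Structure} (note the \qed after the statement and the preceding phrase ``As an immediate consequence of this lemma, we have''), and your write-up simply makes explicit the routine verification of the three conditions in Definition~\ref{def:contact_symplectic_fibration}.
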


For completeness let us provide the following elementary fact which will be used in the proof of Theorem \ref{thm:Existence_on_contact_symp_fib}:

\begin{lemma} \label{lem:Contact_Str_on_M^Odd_cross_Exact}
The product of a contact manifold with an exact symplectic manifold admits a contact structure.
\end{lemma}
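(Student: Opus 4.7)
The plan is to write down an explicit contact form on the product $M \times F$ by adding (the pullbacks of) the contact form on $M$ and the primitive $\beta$ on $F$, and then verify the contact condition by a direct degree count.

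Concretely, let $(M,\alpha)$ be a contact manifold with $\dim M = 2n+1$ and $\alpha \wedge (d\alpha)^n$ a positive volume form, and let $(F,\beta)$ be an exact symplectic manifold of dimension $2k$ with $\omega = d\beta$. Denote by $p_M : M \times F \to M$ and $p_F : M \times F \to F$ the natural projections, and set
\[
\sigma \;:=\; p_M^{\,*}\alpha \,+\, p_F^{\,*}\beta.
\]
Then $d\sigma = p_M^{\,*}(d\alpha) + p_F^{\,*}\omega$, and since $d\alpha$ and $\omega$ are forms of even degree on different factors, they commute under wedge, so the binomial theorem gives
\[
(d\sigma)^{n+k} \;=\; \sum_{j=0}^{n+k} \binom{n+k}{j}\,(p_M^{\,*}d\alpha)^{j}\wedge (p_F^{\,*}\omega)^{n+k-j}.
\]
Because $d\alpha$ is pulled back from the $(2n{+}1)$-manifold $M$, one has $(d\alpha)^{j} = 0$ for $j \geq n+1$; similarly $\omega^{n+k-j} = 0$ for $n+k-j \geq k+1$, i.e.\ for $j \leq n-1$. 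Hence only the term $j=n$ survives:
\[
(d\sigma)^{n+k} \;=\; \binom{n+k}{n}\,(p_M^{\,*}d\alpha)^{n}\wedge (p_F^{\,*}\omega)^{k}.
\]

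Wedging with $\sigma$ produces two terms. The term coming from $p_F^{\,*}\beta$ vanishes because $\beta \wedge \omega^{k}$ is a $(2k{+}1)$-form on the $2k$-dimensional manifold $F$, hence zero. Therefore
\[
\sigma \wedge (d\sigma)^{n+k} \;=\; \binom{n+k}{n}\,p_M^{\,*}\!\bigl(\alpha \wedge (d\alpha)^{n}\bigr) \wedge p_F^{\,*}\bigl(\omega^{k}\bigr),
\]
which is the wedge of a positive volume form on $M$ with a positive volume form on $F$, and thus a volume form on $M \times F$. This shows $\sigma$ is a (positive) contact form, so $\mathrm{Ker}(\sigma)$ is a co-oriented contact structure on $M \times F$.

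There is essentially no obstacle here: the only point requiring attention is the degree bookkeeping that forces all cross-terms except $j=n$ to vanish, together with the observation that $\beta \wedge \omega^{k} \equiv 0$ on $F$ for dimensional reasons. Both points are immediate once one writes the forms by degree. If $F$ has boundary, the same formula works verbatim on $M \times F$; no rescaling or cut-off is needed because the lemma only claims existence of a contact structure on the product, and the contact condition has been verified pointwise everywhere on $M \times F$.
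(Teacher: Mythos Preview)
Your proof is correct and follows essentially the same approach as the paper: define $\sigma$ as the sum of the pulled-back contact form and the pulled-back Liouville form, expand $\sigma\wedge(d\sigma)^{n+k}$ via the binomial formula, and use degree reasons on each factor to isolate the single surviving term $\binom{n+k}{n}\,\alpha\wedge(d\alpha)^n\wedge\omega^k$. The only cosmetic difference is that you first compute $(d\sigma)^{n+k}$ and then wedge with $\sigma$, whereas the paper writes the final identity for $\sigma\wedge(d\sigma)^{n+m}$ directly.
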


\begin{proof}
Let $(B^{2n+1},\mu)$ be any contact manifold, and $(F^{2m},\beta)$ an exact symplectic manifold. Consider the product $B \times F$ and the standard projections given by $$\pi_1:B \times F \longrightarrow B, \qquad \pi_2:B \times F \longrightarrow F.$$
Consider the pull-back forms $\tilde{\mu}:=\pi_1^{\ast}(\mu)$, $\tilde{\beta}:=\pi_2^{\ast}(\beta)$ and set
$$\sigma:=\tilde{\mu}+\tilde{\beta}.$$
Then by the binomial expansion formula and using the facts $(d\tilde{\mu})^i=0, \; \forall i\geq n+1$ and $(d\tilde{\beta})^j=0, \; \forall j\geq m+1$, we compute $$\sigma \wedge (d\sigma)^{n+m}=
\left (\begin{array} c n+m \\ m \end{array} \right)
\, \tilde{\mu} \wedge (d\tilde{\mu})^n \wedge (d\tilde{\beta})^m.$$ Therefore, $\sigma \wedge (d\sigma)^{n+m}$ is a volume form on the product manifold $B \times F$ because $\mu \wedge (d\mu)^n$ and $(d\beta)^m$ are volume forms on $B$ and $F$, respectively. Hence, $\sigma$ is a contact form on $B \times F$.
\end{proof}

\begin{remark}
After the above observations, one can prove Theorem \ref{thm:Existence_on_contact_symp_fib} in the following way: Given a contact symplectic fibration as in Definition \ref{def:contact_symplectic_fibration}, as the transition maps are contained in $\textrm{Exact}(F,\beta)$ we can glue the forms $\beta$ on each locally trivial piece of the fibration $\pi:E \to B$ to get a smooth $1$-form, say $\lambda$, on $E$ which restricts to an exact symplectic structure on each fiber of $\pi$. Then by following the lines in the proof of Lemma \ref{lem:Contact_Str_on_M^Odd_cross_Exact}, it is easy to check that the pull-back form $\mu+\lambda$ on $E$ defines contact structure which is compatible with $\pi$. For the purpose of the subsequent parts of the paper we now prove Theorem \ref{thm:Existence_on_contact_symp_fib} in a different perspective.
\end{remark}

\begin{proof}[Proof of Theorem \ref{thm:Existence_on_contact_symp_fib}]
Let $(F\hookrightarrow E \stackrel{\pi}{\twoheadrightarrow} B)_G$ be any  given contact symplectic fibration with $\textrm{dim}(F)=2m$ and $\textrm{dim}(B)=2n+1$. By the fiber bundle structure, we know that up to diffeomorphism the total space $E$ is obtained by patching the trivial $F$-bundles, and the transition maps are contained in the structure group $G \subset \textrm{Exact}(F,\beta)$ where $(F,\beta)$ is any fixed fiber of $\pi$. More precisely, there is an open cover $\{U_\alpha\}_\alpha$ of $B$ and a collection of diffeomorphisms $\phi_\alpha: \pi^{-1}(U_\alpha) \to U_\alpha \times F$ such that the diagram
$$\begin{array}{ccc}
\pi^{-1}(U_\alpha)&\stackrel{\phi_\alpha}{\longrightarrow}&U_\alpha \times F\\
\;\; \downarrow \pi&& \quad \;\;\downarrow \small{pr_1}\\
U_\alpha &=& U_\alpha
\end{array}$$
commutes. Moreover, by restricting $\phi_\alpha$ to any fiber $F_b=\pi^{-1}(b)$ and then projecting onto $F$-factor, we get a smooth map $\phi_\alpha(b):F_b \to F$, and the transition maps $\phi_{\gamma \alpha}: U_\alpha \cap U_\gamma \to G \subset \textrm{Exact}(F,\beta)$ given by $$\phi_{\gamma \alpha}(b)=\phi_\gamma(b) \circ \phi_\alpha(b)^{-1}.$$

Using paracompactness, one can find another open cover $\{V_\rho\}_\rho$ of $B$ such that for any index $\rho$ the closure $\overline{V}_\rho$ is contained in $U_\alpha$ for some $\alpha$. Since $B$ is compact (by assumption), there is a finite subcover $\{V_1,V_2, ..., V_r\}$ of $\{V_\rho\}_\rho$. Here one can assume that $V_i$'s are listed in a specific order so that $V_{i} \cap V_{i+1} \neq \emptyset$ whenever $V_i$ and $V_{i+1}$ belong to the same connected component of $B$. Indeed, the argument below can be done independently for each connected component. Therefore, without loss of generality $B$ will be assumed to be connected. Now consider the collection $\mathcal{W}:=\{W^i \subset B\, |\, i=1,...,r \}$ of compact sets defined by $$W^1=\overline{V}_1, \quad W^i=\overline{(\overline{V}_1 \cup \cdots \cup \overline{V}_{i}) -W^{i-1}}, \quad i=2,3, ..., r.$$
Clearly, $\mathcal{W}$ forms a partition for the base space $B$, that is, we have
\begin{equation} \label{eqn:Partition}
B=W^1 \cup W^2 \cup \cdots \cup W^r,
\end{equation}
for any $i,j$, $\textrm{int} (W^i) \cap \textrm{int} (W^j) =\emptyset$, and each union $W^1 \cup W^2 \cup \cdots \cup W^i$ intersects with $W^{i+1}$ along a smooth compact hypersurface $$H_i:=(W^1 \cup W^2 \cup \cdots \cup W^i) \cap W^{i+1} \subset B.$$ Observe that for each $i=1,...,r-1$ we have the map  $\phi_{i}: H_i \to G \subset \textrm{Exact}(F,\beta)$ obtained by restricting the corresponding $\phi_{\gamma \alpha}$. More precisely, from the construction of $\{V_\rho\}_\rho$, we know for each $i$ and $j$ with $1\leq j \leq i$ that $W^j \subset U_{\alpha_j}$ and $W^{i+1} \subset U_\gamma$ for some $\alpha_j,\gamma$. So one can define the map $\phi_i$ by patching the restrictions of $\phi_{\gamma \alpha_j}$ to the subsets $W^j \cap W^{i+1} \subset U_{\alpha_j} \cap U_\gamma$ together. As a result, up to diffeomorphism we obtain the description given by
\begin{equation} \label{eqn:Description}
E=(W^1 \times F) \cup_{\Phi^1} (W^2 \times F ) \cup_{\Phi^2} \cdots \cup_{\Phi^{r-1}} (W^r \times F )
\end{equation}
where each $\Phi^i:H_i \times F \to H_i \times F$ is the smooth gluing map defined by $$\Phi^i(b,p)=(b,\phi_i(b)(p)).$$
For each $i$ the fact that $\phi_i \in \textrm{Exact}(F,\beta)$ implies that there exists a smooth function $\psi_i:F \to \R$ such that $\phi_i^*(\beta)=\beta + d \psi_i$. In accordance with the smooth gluing map (or rule) $\Phi^i$, one can patch all these functions together which yields a smooth function $\Psi_i:H_i \times F \to \R$.\\

Next, in order to construct a compatible contact structure on $E$, we will use the above description as follows: By assumption $B$ admits a contact structure, say $\mu$. For any real number $K>0$, the $1$-form $K\mu$ is contact (indeed, the manifolds $(B,\textrm{Ker}(\mu))$ and $(B,\textrm{Ker}(K\mu))$ are contactomorphic, and their contact hyperplane distributions coincide everywhere on $B$). Also from Lemma \ref{lem:Contact_Str_on_M^Odd_cross_Exact} we know that for any $K>0$ the $1$-form $K\mu + \beta$ is a contact form on $B \times F$, and so, in particular, on each $W^i \times F$. (For simplicity, we will denote the pull-back of $\mu$ (resp. $\beta$) on any product space still by $\mu$ (resp. by $\beta$).\\

For a sufficiently small $\epsilon>0$ consider the tubular neighborhood $H_1 \times [-\epsilon,\epsilon]$ of $H_1=W^1 \cap W^2$ in $W^1 \cup W^2$, and let $f_1:H_1 \times [-\epsilon,\epsilon] \times F \longrightarrow \R$ be a smooth cut-off function defined by $f_1(b,x,p)=f(x)$ where $f:[-\epsilon,\epsilon] \to \R$ is the smooth cut-off function such that $f(x)\equiv 1$ on $ (-\delta,\delta)$ for some $0<\delta<\epsilon$ and $f(x)\equiv 0$ for all $x$ near $ \pm \epsilon$ as in Figure \ref{fig:Cut_off_function}. \\

\begin{figure}[ht]
\begin{center}
\includegraphics{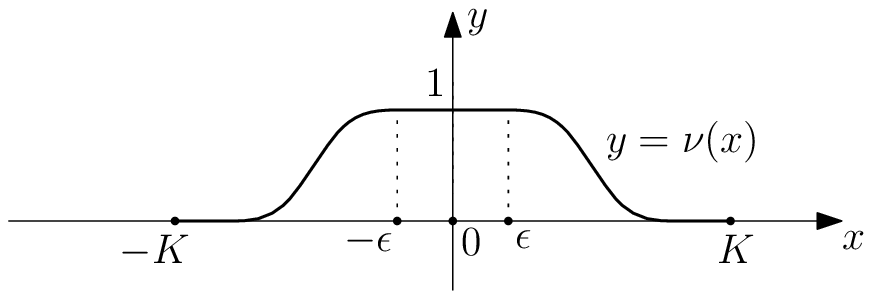}
\caption{Smooth cut-off funtion  $f(x)$}
\label{fig:Cut_off_function}
\end{center}
\end{figure}

For a real number $K_1>0$, consider the smooth $1$-form $\sigma_1:=K_1\mu + \beta +  f_1 d \Psi_1$ on $H_1 \times [-\epsilon,\epsilon] \times F$. By construction we can smoothly glue this form with the contact form $K_1\mu + \beta$ on $[(W^1 \cup W^2)-(H_1 \times (-\epsilon,\epsilon)] \times F$. Also observe that $\sigma_1$ is in accordance with the gluing map $\Phi^1$ because for any fiber $F$ over the gluing region $H_1$ (where $f_1\equiv 1$) we have $\sigma_1|_F=\beta + d\psi_1=\phi_1^*(\beta)=(\Phi^1|_F)^*(\beta)$. Therefore, $\sigma_1$ is, indeed, a smooth $1$-form on $(W^1 \times F) \cup_{\Phi^1} (W^2 \times F )$. Moreover, on the region $H_1 \times [-\epsilon,\epsilon] \times F$ we compute
\\
\begin{eqnarray} \label{eqn:contact_condition}
\sigma_1 \wedge (d\sigma_1)^{n+m}&=&C_1K_1^{n+1}\mu(d\mu)^{n}(d\beta)^{m}+C_2K_1^{n}f_1'\,\mu(d\mu)^{n-1}(d\beta)^{m}dx\,d \Psi_1\\
\nonumber &+&C_3K_1^{n}(d\mu)^{n}(d\beta)^{m}d \Psi_1 + C_4K_1^{n}f_1'\,(d\mu)^{n}\beta(d\beta)^{m-1}dx\,d \Psi_1
\end{eqnarray}
\\
where $C_1,C_2,C_3,C_4$ are constants detemined by the binomial expansion formula for $(d\sigma_1)^{n+m}$, and we use the facts $(d\mu)^i=0, \; \forall i\geq n+1$ and $(d\beta)^j=0, \; \forall j\geq m+1$. Observe that if $K_1$ is chosen large enough, the first term on the right hand side is dominant over all other terms since it has the highest degree of $K_1$. Therefore, for $K_1$ large enough $\sigma_1 \wedge (d\sigma_1)^{n+m}$ is a volume form on $H_1 \times [-\epsilon,\epsilon] \times F$, and hence $\sigma_1$ is a contact form on $(W^1 \times F) \cup_{\Phi^1} (W^2 \times F )$.\\

Now we want to extend $\sigma_1$ to a contact form on $(W^1 \times F) \cup_{\Phi^1} (W^2 \times F ) \cup_{\Phi^2} (W^3 \times F)$: For $\epsilon>0$ sufficiently small, consider the tubular neighborhood $H_2 \times [-\epsilon,\epsilon]$ of $H_2=(W^1 \cup W^2) \cap W^3$ in $W^1 \cup W^2 \cup W^3$ such that $H_2 \times [-\epsilon,0] \subset W^3$ and $H_2 \times [0,\epsilon] \subset W^1 \cup W^2$. Take a smooth cut-off function $g_2:H_2 \times [-\epsilon,\epsilon] \times F \longrightarrow \R$ defined by $g_2(b,x,p)=g(x)$ where $g:[-\epsilon,\epsilon] \to \R$ is the smooth cut-off function such that $g(x)\equiv 1$ on $ (-\delta,\epsilon]$ for some $0<\delta<\epsilon$ and $g(x)\equiv 0$ for all $x$ near $-\epsilon$ as in Figure \ref{fig:Cut_off_function_2}. \\

For a real number $K_2>0$, we define the smooth $1$-form $\sigma_2:=K_2\mu + \beta +  f_2 d \Psi_2$ on $H_2 \times [-\epsilon,\epsilon] \times F$ where $f_2=g_2f_1$. On the region $H_2 \times [-\epsilon,\epsilon] \times F$, similar to the computation given in Equation (\ref{eqn:contact_condition}) we have

\begin{figure}[ht]
\begin{center}
\includegraphics{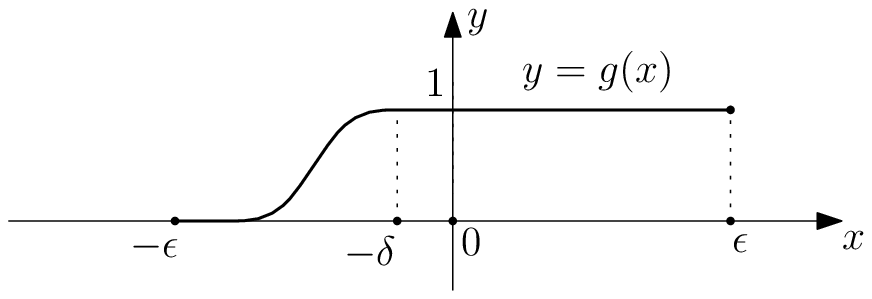}
\caption{Smooth cut-off funtion $g(x)$}
\label{fig:Cut_off_function_2}
\end{center}
\end{figure}

\begin{eqnarray} \label{eqn:contact_condition_2}
\sigma_2 \wedge (d\sigma_2)^{n+m}&=&C_1K_2^{n+1}\mu(d\mu)^{n}(d\beta)^{m}+C_2K_2^{n}f_2'\,\mu(d\mu)^{n-1}(d\beta)^{m}dx\,d \Psi_2\\
\nonumber &+&C_3K_2^{n}(d\mu)^{n}(d\beta)^{m}d \Psi_2 + C_4K_2^{n}f_2'\,(d\mu)^{n}\beta(d\beta)^{m-1}dx\,d \Psi_2
\end{eqnarray}

which implies that if $K_2$ is chosen large enough, then $\sigma_2$ is contact on $H_2 \times [-\epsilon,\epsilon] \times F$.
Moreover, the functions $\Psi_1$ and $\Psi_2$ are identical on their overlaping domains because $\phi_1$ and $\phi_2$ are constructed using the same transition functions on their common domains. As a result, if we choose $K_1=K_2=\textrm{Max}(K_1,K_2)$, then by gluing with $\sigma_1$ we obtain a smooth contact form $\sigma_2$ on $(W^1 \times F) \cup_{\Phi^1} (W^2 \times F ) \cup_{\Phi^2} (W^3 \times F)$.\\

Next, we can repeatedly follow exactly the same lines of the last extension argument for the remaining parts of the description of $E$ given in Equation (\ref{eqn:Description}), and eventually obtain a contact form on $E$. \\

More precisely, for each $i=3,4,...,r-1$ and $\epsilon>0$ small enough, consider the smooth cut-off function $g_i:H_i \times [-\epsilon,\epsilon] \times F \longrightarrow \R$ defined by $g_i(b,x,p)=g(x)$. Here, similar to above, we assume that $H_i \times [-\epsilon,0] \subset W^{i+1}$ and $H_i \times [0,\epsilon] \subset W^1 \cup \cdots \cup W^i$. Suppose that the contact form $$\sigma_{i-1}=K_{i-1}\mu + \beta +  f_{i-1} d \Psi_{i-1}, \quad \textrm{where} \quad f_{i-1}=g_{i-1}\cdots g_2f_1,$$ is already constructed on the union $(W^1 \times F) \cup_{\Phi^1} \cdots \cup_{\Phi^{i-1}} (W^{i} \times F )$, and it extends $\sigma_{i-2}$. Note that one needs to choose the constants $K_j$ ($j=1,2,...,i-1$) to be $K_1=K_2=\cdots = K_{i-1}=\textrm{Max}(K_1,K_2,...,K_{i-1})$ to perform this extension. Then one can similarly check as in Equation (\ref{eqn:contact_condition_2}) that the form $$\sigma_i:=K_i\mu + \beta +  f_i d \Psi_i, \quad \textrm{where} \quad f_{i}=g_i f_{i-1}=g_ig_{i-1}\cdots g_2f_1,$$ is contact on $H_i \times [-\epsilon,\epsilon] \times F$ if $K_i$ is large enough. Moreover, one can see $\sigma_i$ as the extension of $\sigma_{i-1}$ to the union $(W^1 \times F) \cup_{\Phi^1} \cdots \cup_{\Phi^{i}} (W^{i+1} \times F )$ once we do the reassignments $$K_1=K_2=\cdots = K_{i}=\textrm{Max}(K_1,K_2,...,K_{i}).$$

Finally, we set $\sigma:=\sigma_{r-1}$. Then by the construction $\sigma$ restricts to an exact symplectic structure on each fiber of $\pi$. So the contact form $\sigma$ defines a contact structure $\textrm{Ker}(\sigma)$ on $E$ which is compatible with the fibration map $\pi:E \to B$.\\

In order to prove the second statement, observe that if $G\subset \textrm{Exact}(F,\partial F,\beta)$, then for any transition map $\phi_{\alpha \gamma}$ used in the above construction and for any $b \in  U_\alpha \cap U_\gamma$ we have $\phi_{\alpha \gamma}(b)\equiv Id_F$ near $\partial F$ (in particular, $\partial_hE \approx B \times \partial F$). Therefore, the smooth maps $\psi_i:F \to \R$ in the construction are all constant functions near $\partial F$, and so the corresponding functions $\Psi_i:H_i \times F \to \R$ take constant values near $\partial_hE$ (i.e., $d\Psi_i=0$ near $\partial_hE$). As a result, the description of the contact form $\sigma(=\sigma_{r-1})$ on $E$ given above takes the form $\sigma=K_{r-1}\mu + \beta$ near $\partial_hE$.
\end{proof}

%
%

\section{Bundle Contact Structures} \label{sec:Bundle_Contact_Structures}
From the previous section any contact symplectic fibration admits a (co-oriented) contact structure. Here we study the flexibility of such contact structures. More specifically, we are interested in how they react if we change the contact structure on the base in its isotopy class. Before we proceed let us first fix some terminonology and notation.

\begin{definition}
Let $(F\hookrightarrow E \stackrel{\pi}{\twoheadrightarrow} B)_G$ be a contact symplectic fibration with the contact base $(B,\textrm{Ker}(\mu))$ and the structure group $G \subset \textrm{Exact}(F,\beta)$. The contact structure $\textrm{Ker}(\sigma)$ on $E$ which is defined by the contact form $$\sigma=\sigma_{r-1}=K_{r-1}\mu + \beta +  f_{r-1} d \Psi_{r-1},$$ constructed as in the proof of Theorem \ref{thm:Existence_on_contact_symp_fib}, is called a \emph{bundle contact structure for} $\pi$ \emph{associated to} $\mu$, and will be denoted by $\xi(\mu)$.
\end{definition}

Note by definition bundle contact structures are compatible with the fibration maps. Next we show that isotopic contact structures on the base space produce isotopic bundle contact structures on the total space.

\begin{theorem} \label{thm:isotopic_to_isotopic}
Let $(F\hookrightarrow E \stackrel{\pi}{\twoheadrightarrow} B)_G$ be a contact symplectic fibration. Suppose co-oriented contact structures $\eta_i=\emph{Ker}(\mu_i)$, $i=0,1$, on $B$ are isotopic to each other. Then the corresponding bundle contact structures $\xi(\mu_0), \xi(\mu_1)$ on $E$ are isotopic through bundle contact structures on $E$. In particular, $(E,\xi(\mu_0))$ and $(E,\xi(\mu_1))$ are contactomorphic.
\end{theorem}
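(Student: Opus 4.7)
The plan is to build a smooth $1$-parameter family of bundle contact structures on $E$ that interpolates between $\xi(\mu_0)$ and $\xi(\mu_1)$, and then invoke Gray's stability to deduce a contactomorphism. First I would realize the given isotopy $\eta_t$ by a smooth $1$-parameter family of contact forms $\mu_t$ on $B$ (possible since $\eta_0, \eta_1$ are co-oriented), and then feed each $\mu_t$ into the construction from the proof of Theorem \ref{thm:Existence_on_contact_symp_fib}.

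The crucial observation is that the combinatorial and fibration-theoretic ingredients of that construction --- the finite subcover $\{V_1,\dots,V_r\}$, the partition $\{W^i\}$, the hypersurfaces $H_i$, the transition maps $\phi_i$ with their primitives $\psi_i$ patched into the functions $\Psi_i$, and the tubular collars with cut-off functions $f_i = g_i\cdots g_2 f_1$ --- depend only on the bundle $\pi:E\to B$ and the fiber form $\beta$, not on the chosen contact form on $B$. Hence for any $K>0$ one may form
\[
\sigma_t := K\mu_t + \beta + f_{r-1}\, d\Psi_{r-1},
\]
a smooth family of $1$-forms on $E$ each of which restricts fiberwise to an exact symplectic structure (namely $\beta$, respectively $\phi_i^*\beta$ over $H_i$, exactly as in the proof of Theorem \ref{thm:Existence_on_contact_symp_fib}). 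Whenever $\sigma_t$ is contact, its kernel is by definition the bundle contact structure $\xi(\mu_t)$ associated to $\mu_t$.

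The main --- and essentially the only --- obstacle is to choose one constant $K$ that works uniformly for every $t\in[0,1]$. Repeating the expansion in Equations (\ref{eqn:contact_condition}) and (\ref{eqn:contact_condition_2}) with $\mu_t$ in place of $\mu$, the leading term of $\sigma_t\wedge(d\sigma_t)^{n+m}$ on each slab $H_i\times[-\epsilon,\epsilon]\times F$ is $C_1K^{n+1}\mu_t(d\mu_t)^n(d\beta)^m$, a positive volume form for every $t$, while the remaining terms are of order $K^n$ with coefficients depending continuously on $t\in[0,1]$ and $b\in B$. Compactness of $[0,1]\times B$, together with the fact that the $f_i,f_i',\Psi_i,d\Psi_i$ contributions are supported in compact collars, supplies a uniform positive lower bound on the leading term and a uniform upper bound on the correction; choosing $K$ above the resulting threshold makes $\sigma_t$ contact on $E$ simultaneously for every $t\in[0,1]$. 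The family $\{\xi(\mu_t):=\textrm{Ker}(\sigma_t)\}_{t\in[0,1]}$ is then a smooth isotopy of bundle contact structures joining $\xi(\mu_0)$ to $\xi(\mu_1)$, and Gray's stability theorem supplies a diffeomorphism of $E$ carrying one to the other, completing the argument.
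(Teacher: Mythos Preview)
Your approach is essentially the paper's: fix the combinatorial data $(W^i,H_i,f_i,\Psi_i)$ once and for all, then use compactness over $[0,1]$ to pick a single constant $K$ making $\sigma_t=K\mu_t+\beta+f_{r-1}\,d\Psi_{r-1}$ contact for every $t$, and finish with Gray stability. The paper phrases the compactness step slightly differently (choose $K_t$ continuously and take $K_{upp}=\sup_t K_t$), but this is cosmetic.

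There is one small endpoint issue you skate over. The objects $\xi(\mu_0)$ and $\xi(\mu_1)$ in the statement come with \emph{specific} constants $K_0=K^0_{r-1}$ and $K_1=K^1_{r-1}$ already chosen, and $\mathrm{Ker}(K\mu_j+\beta+f_{r-1}d\Psi_{r-1})$ is not the same hyperplane field as $\mathrm{Ker}(K_j\mu_j+\beta+f_{r-1}d\Psi_{r-1})$ when $K\neq K_j$. The paper handles this by concatenating three pieces: a linear ramp $[(1-t)K_0+tK]\mu_0+\cdots$ from $K_0$ up to $K$, then your family $K\mu_t+\cdots$, then a ramp $[(1-t)K+tK_1]\mu_1+\cdots$ back down to $K_1$. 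Each ramp stays contact because the leading coefficient never drops below the original threshold. (The paper also explicitly rescales the family $\alpha_t$ of contact forms realizing $\eta_t$ so that it literally begins at $\mu_0$ and ends at $\mu_1$, a step you assert but do not carry out.) Adding these two short bridging segments to your construction makes it match the paper's proof exactly.
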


\begin{proof}
Since $\eta_0,\eta_1$ are isotopic, there exists a smooth family $\{\eta_t \,| \, t \in [0,1]\}$ of contact structures on $B$ joining them. Consider a smooth family $\{\alpha_t \,| \, t \in [0,1]\}$ of contact forms on $B$ such that $\alpha_0=\mu_0$ and $\textrm{Ker}(\alpha_t)=\eta_t$ for all $t$. Since $\textrm{Ker}(\alpha_1)=\eta_1=\textrm{Ker}(\mu_1)$, we have $\alpha_1=h\mu_1$ for some smooth function $h:B \to \R^+$. Then we obtain the (modified) smooth family $$\{\mu_t \,| \, t \in [0,1]\}, \quad \mu_t:=\left(1-t+\dfrac{t}{h}\right)\alpha_t$$ of contact forms on $B$ joining $\mu_0$ and $\mu_1$ and with the property $\textrm{Ker}(\mu_t)=\eta_t$ for all $t$. \\

Consider the bundle contact structure $\xi(\mu_t)$ on $E$ associated to $\mu_t$ given by the contact form $\sigma_t=K^t_{r-1}\mu_t + \beta +  f_{r-1} d \Psi_{r-1}$ constructed as in the proof of Theorem \ref{thm:Existence_on_contact_symp_fib}. In order to simplify the notation, we will write $K_t$ for $K^t_{r-1}$. Let us consider
\begin{center}
$\mathcal{K}:=\{K_t \, | \, t \in [0,1] \}$.
\end{center}
For each $t$ the choice of $K_t$ can be done in such a way that $K_t$'s varies continuously with respect to $t$. This is possible since $\{\mu_t \,| \, t \in [0,1]\}$ is a smooth $1$-parameter family. Therefore, the set $\mathcal{K}$ can be realized as the range set of a continuous function on the compact set $[0,1]$, and hence there exists an upper bound, say $K_{upp}$, for $\mathcal{K}$.
We set $ K=\textrm{Max} (K_0,K_{upp})$, and consider three smooth $1$-parameter families:

\begin{eqnarray}
\nonumber \lambda^1_t &:=& [(1-t)K_0+tK]\mu_0 + \beta +  f_{r-1} d \Psi_{r-1}, \quad t \in [0,1], \\
\nonumber \lambda^2_t &:=& K\mu_t + \beta +  f_{r-1} d \Psi_{r-1}, \quad t \in [0,1],\\
\nonumber \lambda^3_t &:=& [(1-t)K+tK_1]\mu_1 + \beta +  f_{r-1} d \Psi_{r-1}, \quad t \in [0,1].
\end{eqnarray}

As $[(1-t)K_0+tK]\geq K_0$ (resp. $K\geq K_t$ and $[(1-t)K+tK_1]\geq K_1$) for each $t$, the construction in Theorem \ref{thm:Existence_on_contact_symp_fib} implies that $\lambda^1_t$ (resp. $\lambda^2_t$ and $\lambda^3_t$) is a contact form on $E$ for any $t \in [0,1]$. Now we define a concatenation by the rule:
$$ \Lambda_t:=
\begin{cases}
\lambda^1_{3t} & \textrm{if} \quad t \in [0,1/3] \\
\lambda^2_{3t-1} & \textrm{if} \quad t \in [1/3,2/3]\\
\lambda^3_{3t-2} & \textrm{if} \quad t \in [2/3,1]
\end{cases}.
$$
Observe that $\Lambda_0=\sigma_0$ and $\Lambda_1=\sigma_1$, and so $\{\Lambda_t \, | \, t \in [0,1]\}$ is a smooth $1$-parameter family of contact forms on $E$ joining $\sigma_0$ and $\sigma_1$. As a result, we obtain a smooth $1$-parameter family $$\{\xi_t:=\textrm{Ker}(\Lambda_t) \, | \, t \in [0,1]\}$$ of (bundle) contact structures on $E$ joining $\xi_0=\textrm{Ker}(\sigma_0)=\xi(\mu_0)$ and $\xi_1=\textrm{Ker}(\sigma_1)=\xi(\mu_1)$. Hence, $\xi(\mu_0)$ and $\xi(\mu_1)$ are isotopic by Gray's Stability Theorem (see, for instance, Theorem 2.2.2 of \cite{Ge}).
\end{proof}

%
%

\section{Fiber Connected Sum} \label{sec:Fiber_connected_sum}
In this section we prove that the fiber connected summing of two contact symplectic fibrations along their fibers results in another contact symplectic fibration, and, in particular, the sum admits a (compatible) bundle contact structure and it fibers over the contact connected sum of the original contact bases. We first recall the fiber connected sum operation in a slightly more general situation than the ones described in \cite{Geiges,Ge}. Namely, we also allow the ambient manifolds to be noncompact or compact with boundary.

\begin{definition} \label{def:Fiber_Connected_Sum}
For an oriented manifolds $M_1, M_2$, let $\phi_i: \Sigma \hookrightarrow M_i$, be two codimension$-k$ embeddings of an oriented manifold $\Sigma$, and  for each $i$ denote by $N_i$ the normal bundle of $\phi_i(\Sigma)$ in $M_i$ (which is an oriented $D^k-$bundle over $\phi_i(\Sigma)$). Let $\Phi:N_1 \to N_2$ be a fiber-orientation-reversing bundle isomorphism covering the composition $\phi_2 \circ \phi_1^{-1}|_{\phi_1(\Sigma)}$. After picking a Riemannian metric and normalization, one can assume that $\Phi$ is norm-preserving. For an interval $(a,b)$, where $0\leq a<b$, define the subset $N_i^{(a,b)}:=\{(p,x) \in N_i \, | \, a< \Vert x \Vert <b\}$. Furthermore, for a given $\epsilon>0$ consider the orientation-preserving diffeomorphism (or identification)
\begin{eqnarray*}
N_1^{(\epsilon/2,\sqrt{3}\epsilon/2)} & \stackrel{\sim_{\Phi}}{\longleftrightarrow} &N_2^{(\epsilon/2,\sqrt{3}\epsilon/2)}
\end{eqnarray*}
given explicitly by the rule
\begin{eqnarray} \label{eqn:Identification}
(p,x) &  \stackrel{\sim_{\Phi}}{\longleftrightarrow} & \left( \phi_2 \circ \phi_1^{-1}(p), \dfrac{\sqrt{\epsilon^2-\Vert x \Vert^2}}{\Vert x \Vert} \cdot \Phi_F(x)  \right)
\end{eqnarray}
where $\Phi_F:D^k \to D^k$ denotes the fiber component of $\Phi$. Then the \emph{fiber connected sum} $M_1 \#_{\Phi} M_2$ of $M_1$, $M_2$ \emph{along} their submanifolds $\phi_1(\Sigma),\phi_2(\Sigma)$ is the oriented manifold  given as the quotient space
\begin{center}
$M_1 \#_{\Phi} M_2:=[(M_1 - N_1^{[0,\epsilon/2]}) \bigcup (M_2 - N_2^{[0,\epsilon/2]})] \;/ \sim_{\Phi}$.
\end{center}
\end{definition}

We remark that in the special case when $M_1=M=M_2$, in order to get a smooth manifold one should also assume that the embeddings $\phi_1,\phi_2$ are disjoint. Now we are ready to prove

\begin{theorem} \label{thm:Fiber_conn_sum_along_convex_hyper}
Let $(F\hookrightarrow E_j \stackrel{\pi_j}{\twoheadrightarrow} B_j)_{G_j}$ be two contact symplectic fibrations with $\emph{dim}(B_1)=\emph{dim}(B_2)=2n+1$. For an exact symplectic manifold $(F,\beta)$, assume that there exist embeddings $\phi_1:F\hookrightarrow E_1$, $\phi_2:F\hookrightarrow E_2$ of the fibers $F_1=\phi_1(F)$ and $F_2=\phi_2(F)$  (which are distinct if $E_1=E_2$ and $\pi_1=\pi_2$) of $\pi_1$ and $\pi_2$, respectively, and for each $j=1,2$ the total space $E_j$
is constructed by regarding $G_j \subset \emph{Exact}(F,\beta)$.
Then if $n$ is odd (resp. even), then one can form the fiber connected sum $E$ of $E_1$ and $E_2$ (resp. $E_1$ and $\overline{E_2}$) along $F_j$'s using some canonical fiber-orientation-reversing bundle isomorhism $\Phi:N_1 \to N_2$ between normal bundles of $F_j$'s such that $E=E_1 \#_{\Phi} E_2$ (resp. $E=E_1 \#_{\Phi} \overline{E_2}$) admits a contact structure $\xi$ which restricts to a bundle contact structure on each piece $E-E_j$ (resp. on $E-E_1$ and $E-\overline{E_2}$). Moreover, there exists a contact symplectic fibration $$(F\hookrightarrow E \stackrel{\pi}{\twoheadrightarrow} B)_G,$$ where $B=B_1 \# B_2$ if $n$ is odd and $B=B_1 \# \overline{B_2}$ if $n$ is even, such that $B_1 \# B_2$ (resp. $B_1 \# \overline{B_2}$) is the contact connected sum of the original contact bases $B_1,B_2$ (resp. $B_1, \overline{B_2}$), $G \subset \emph{Exact}(F,\beta)$, $\pi=\pi_j$ on their overlaping domains, and $\xi$ is a bundle contact structure for $\pi$.
\end{theorem}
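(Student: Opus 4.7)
The plan is to reduce the fiber connected summing of total spaces to the contact connected summing of bases, by arranging that the bundle contact form of Theorem~\ref{thm:Existence_on_contact_symp_fib} takes the pure product form $K\mu_j + \beta$ near each chosen fiber $F_j$. Since $F_j$ is a fiber of $\pi_j$, it projects to a single point $b_j \in B_j$. I choose a Darboux chart $U_j \ni b_j$ for $\mu_j$, small enough that $U_j$ lies in a single trivialization patch of $\pi_j$; this gives $\pi_j^{-1}(U_j) \cong U_j \times F$ with $F_j$ corresponding to $\{b_j\} \times F$, and identifies the normal bundle $N_j$ of $F_j$ with the trivial $D^{2n+1}$-bundle $U_j \times F \to F$.

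Next, I revisit the construction of $\sigma_j = K_j\mu_j + \beta + f^j_{r-1}\,d\Psi^j_{r-1}$ from the proof of Theorem~\ref{thm:Existence_on_contact_symp_fib}. Its cut-off factors are supported in collars of the separating hypersurfaces $H_i$, so by choosing the finite subcover $\{V_1,\ldots,V_r\}$ of $B_j$ so that none of these collars meet a prescribed smaller neighborhood $U_j' \Subset U_j$ of $b_j$, the correction term vanishes on $\pi_j^{-1}(U_j')$, leaving $\sigma_j = K_j\mu_j + \beta$ there. In parallel, I form the contact connected sum of the bases via Geiges' construction \cite{Geiges}: excise small standard Darboux balls around the $b_j$ and glue the remaining annular collars by an orientation-reversing contactomorphism $\Psi$, producing $(B,\mu)$ with $B = B_1 \# B_2$ when $n$ is odd and $B = B_1 \# \overline{B_2}$ when $n$ is even, and with $\mu = \mu_j$ outside the surgery neck. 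The dichotomy on $n$ is forced by the orientation convention for Geiges' gluing: the sign of the reflection on the $D^{2n+1}$-fiber dictates whether the standard contact handle embeds into $B_2$ or $\overline{B_2}$ to yield a positive contact form on the sum.

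For the total space, I define the canonical fiber-orientation-reversing bundle isomorphism $\Phi: N_1 \to N_2$ by $\Phi := \Psi \times \textrm{id}_F$ in the trivializations $N_j \cong U_j \times F$. It covers the identity of $F$ and reverses orientation of each $D^{2n+1}$-fiber, as required by Definition~\ref{def:Fiber_Connected_Sum}, because $\Psi$ does. Form $E = E_1 \#_\Phi E_2$ (respectively $E_1 \#_\Phi \overline{E_2}$). Since $\Phi$ commutes with projection onto the $U_j$-factor, the identification $\sim_\Phi$ on normal bundles descends to $\sim_\Psi$ on bases; thus the old maps $\pi_j$, glued with the projection on the $F$-factor over the surgery neck (topologically $S^{2n} \times I \times F$ after trivialization), assemble into a smooth fibration $\pi: E \to B$ with fiber $F$. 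Its transition cocycle is that of $\pi_j$ on each surviving piece and is $\textrm{id}_F$ across the neck, so $G \subset \textrm{Exact}(F,\beta)$.

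Finally, I construct the contact form. On $E_j \setminus N_j^{[0,\epsilon/2]}$ set $\sigma = \sigma_j$, and on the surgery neck set $\sigma = K\mu + \beta$, where $K$ is chosen large enough both to satisfy the dominance estimate (\ref{eqn:contact_condition}) from the proof of Theorem~\ref{thm:Existence_on_contact_symp_fib} and to exceed $\max(K_1, K_2)$. Since $\sigma_j = K_j\mu_j + \beta$ on a neighborhood of $F_j$ and $\mu = \mu_j$ there, replacing $K_j$ by $K$ only strengthens contactness (the dominant $K^{n+1}$ term grows), so the two descriptions agree on the overlap and define a smooth global $1$-form $\sigma$ on $E$. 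Lemma~\ref{lem:Contact_Str_on_M^Odd_cross_Exact} applied on the neck together with the dominance argument on the old pieces shows $\sigma$ is contact; it restricts to $\beta$ on every fiber and equals $\sigma_j$ on each surviving piece of $E_j$, so $\xi := \ker\sigma$ is a bundle contact structure $\xi(\mu)$ for $\pi$ restricting to $\xi(\mu_j)$ on each piece coming from $E_j$. The main obstacle is the orientation bookkeeping underlying the parity dichotomy: one must verify that $\Phi = \Psi \times \textrm{id}_F$ is genuinely fiber-orientation-reversing in the sense of Definition~\ref{def:Fiber_Connected_Sum} and that the induced orientation on the glued $E$ matches the contact orientation of $\sigma$, which is what forces the two cases $B_1 \# B_2$ versus $B_1 \# \overline{B_2}$.
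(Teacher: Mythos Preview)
Your proposal is correct and follows essentially the same strategy as the paper: arrange the bundle contact form to be the pure product $K\mu_j+\beta$ on a Darboux neighborhood of the chosen fiber, perform Geiges' contact connected sum on the bases, and lift the gluing trivially (identity on the $F$-factor) to the total space so that the product contact forms match across the neck. The only cosmetic difference is that the paper writes down an explicit coordinate formula for $\Phi$ and verifies the pullback $(\Upsilon|_{S_1})^*(\sigma_2)=\sigma_1$ by hand (which is where the parity of $n$ appears), whereas you package this as ``$\Phi=\Psi\times\mathrm{id}_F$'' and invoke Geiges' base-level construction directly; both routes require the global reassignment $K_1=K_2=\max(K_1,K_2)$ that you describe as ``replacing $K_j$ by $K$.''
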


\begin{proof} First note that each $B_j$ admits a contact form $\mu_j$ from which we can construct a contact form $\sigma_j$ on $E_j$ using the pull-back form $\pi_j^*(K_j\mu_j)$ by taking $K_j>0$  large enough. (See the proof of Theorem \ref{thm:Existence_on_contact_symp_fib}). Also in the case when $n$ is even, since the orientation on $\overline{B_2}$ is opposite of the one on $B_2$, one should consider the contact form $-\mu_2$ in order to define a positive contact structure on $\overline{B_2}$.  By assumption both $E_1$ and $E_2$ can be equipped with a fiber bundle structure such that structure groups $G_1,G_2 \subset \textrm{Exact}(F,\beta)$. Consider the decomposition of each $E_j$ (as in Equation (\ref{eqn:Description})) given by
$$E_j=(W_j^1 \times F) \cup_{\Phi_j^1} (W_j^2 \times F ) \cup_{\Phi_j^2} \cdots \cup_{\Phi_j^{r_j-1}} (W_i^{r_j} \times F )$$
where we consider the decomposition of each $B_j$ (as in Equation (\ref{eqn:Partition})) given by $$B_j=W_j^1 \cup W_j^2 \cup \cdots \cup W_j^{r_j}.$$
From differential topology point of view we are free to choose the fibers $F_1$ and $F_2$ along which the fiber connected sum is performed because different choices of fibers produces diffeomorphic total spaces which fibers over diffeomorphic bases. Therefore, by changing the fiber $F_i$ (if necessary), we may assume that the embedding $\phi_j$ maps $F$ onto a fiber $F_j=\phi_j(F)$ which lies in $\textrm{int}(W_j^l) \times F$ for some $l\in \{1,...,r_j\}$. Assume that each $F_j$ is the fiber over the point $b_j \in \textrm{int}(W_j^l) \subset B_j$, that is, $F_j=\pi_j^{-1}(b_j)$. \\

By Darboux theorem (see, for instance, Theorem 2.5.1 of \cite{Ge}), for each $j$ there exist a neighborhood $U_j\cong \D^{2n+1}$ of $b_j$ which is contained in $\textrm{int}(W_j^l)$ and coordinates $(\textbf{u},\textbf{v},w)$ (where $\textbf{u}=(u_1,...,u_n)$ and $\textbf{v}=(v_1,...,v_n$)) on $U_j$ such that $$K_j\mu_j=dw+\textbf{u}d\textbf{v} \qquad \textrm{on} \quad U_j.$$
By a further change of coordinates on $\D^{2n+1}\subset \R^{2n+1}$ (see Example 2.1.3 of \cite{Ge} for details) given by $(\textbf{u},\textbf{v},w)\to (\textbf{x}=(\textbf{u} + \textbf{v})/2, \textbf{y}=(\textbf{v} - \textbf{u})/2, z=w + \textbf{u}\cdot \textbf{v}/2)$, for each $j$ we get the local description
\begin{equation} \label{eqn:Local_description}
K_j\mu_j=dz+\sum_{k=1}^{n}(x_kdy_k-y_kdx_k)=dz+\sum_{k=1}^{n}r_k^2d\theta_k
\end{equation}
on $U_j$. Here $\textbf{x}=(x_1,...,x_n)$, $\textbf{y}=(y_1,...,y_n$), and $(r_k,\theta_k)$ are the polar coordinates on the $(x_k,y_k)$-plane.\\

Now consider the tubular neighborhood $\pi_j^{-1}(U_j)=U_j \times F \subset \textrm{int}(W_j^l) \times F$ of $F_j$ in $E_j$, denote by $N_j$ the normal bundle of $F_j$ in $E_j$, and let $\mathcal{P}_j:N_j \to F_j$ be the bundle projection. The tubular neighborhood theorem implies that (by taking a smaller $U_j$ if necessary) we can write $N_j=F \times U_j$ and $F_j=F \times \{\textbf{0}\}$. Then from the construction of $\sigma_j$, by taking $U_j$ small enough, one can guarantee that it is located far enough from $\partial W_j^l$, and so $\sigma_j=\pi_j^*(K_j\mu_j) + \mathcal{P}_j^*(\beta)$ on $F \times U_j$. Combining this with Equation (\ref{eqn:Local_description}), we obtain the local description
\begin{equation} \label{eqn:contact_form_locally}
\nonumber \sigma_j=dz+\sum_{k=1}^{n}r_k^2d\theta_k +\beta \qquad \textrm{on} \quad N_j=F \times U_j.
\end{equation}
where for simplicity (as in the proof of Theorem \ref{thm:Existence_on_contact_symp_fib}) we still write $dz+\sum_{k=1}^{n}r_k^2d\theta_k$ (resp. $\beta$) for the pull-back form $\pi_j^*(dz+\sum_{k=1}^{n}r_k^2d\theta_k)$ (resp. $\mathcal{P}_j^*(\beta)$) on $F \times U_j$.
We will form the fiber connected sum of $E_1$, $E_2$ along $F_1,F_2$ using the (canonical) bundle isomorhism $\Phi:N_1 \to N_2$ of the normal bundles $N_j=F_j \times U_j \subset F_j \times \R^{2n+1}$ which is given by
\begin{center}
$\Phi(p,z,r_1,\theta_1, ...,r_n,\theta_n)=
\begin{cases}
 (\phi_2 \circ \phi_1^{-1}(p),z,-r_1,\theta_1, ...,-r_n,\theta_n)& \textrm{if} \;\; $n$ \; \textrm{is odd}, \\
 (\phi_2 \circ \phi_1^{-1}(p),-z,r_1,-\theta_1, ...,r_n,-\theta_n)& \textrm{if} \;\; $n$ \; \textrm{is even}.
\end{cases}$
\end{center}
Observe that in both cases $\Phi$ is fiber-orientation-reversing, and it covers the composition $\phi_2 \circ \phi_1^{-1}|_{F_1}$. Moreover, $r^2=z^2+r_1^2+\cdots r_n^2$ where $r$ denotes the radial coordinate on $U_j\cong \D^{2n+1}$, and so with respect to the standard metric on $\R^{2n+1}$, $\Phi$ is norm preserving. In particular, for $\epsilon>0$ chosen sufficiently small, $\Phi$ is a bijection between the subsets $$N_j^{[0,\epsilon)}=\{(p,x) \in N_j \, | \, 0\leq \Vert x \Vert \leq \epsilon\} \subset F \times U_j=N_j, \quad j=1,2.$$
Let $\Upsilon:N_1^{(\epsilon/2,\sqrt{3}\epsilon/2)}\to N_2^{(\epsilon/2,\sqrt{3}\epsilon/2)}$ be the orientation-preserving diffeomorphism (obtained from $\Phi$) corresponding to the identification (given in (\ref{eqn:Identification})) constructing $E:=E_1 \#_{\Phi} E_2$. That is, $$\Upsilon(p,x)=\left(p, \dfrac{\sqrt{\epsilon^2-\Vert x \Vert^2}}{\Vert x \Vert} \cdot \Phi_F(x)  \right)$$ where
\begin{center}
$\Phi_F(z,r_1,\theta_1, ...,r_n,\theta_n)=
\begin{cases}
 (z,-r_1,\theta_1, ...,-r_n,\theta_n)& \textrm{if} \;\; $n$ \; \textrm{is odd}, \\
 (-z,r_1,-\theta_1, ...,r_n,-\theta_n)& \textrm{if} \;\; $n$ \; \textrm{is even}.
\end{cases}$
\end{center}
 is the fiber component of $\Phi$. Consider the function $$h:(\epsilon/2,\sqrt{3}\epsilon/2) \to (\epsilon/2,\sqrt{3}\epsilon/2), \quad h(r)=\sqrt{\epsilon^2-r^2}, \quad (r=\Vert x \Vert)$$ which is used in defining the gluing map $\Upsilon$. Since $r=\epsilon/\sqrt{2}$ is the (only) fixed point of $h$, $\Upsilon$ identifies the trivial $2n$-sphere bundles $S_j:=F\times H_j \subset N_j$ where $$H_j:= \{x \in U_j=\D^{2n+1} \, | \, r= \Vert x \Vert =\epsilon/\sqrt{2}\} \cong \S^{2n}$$ is the $2n$-sphere in $U_j=\D^{2n+1}$ of radius $r=\epsilon/\sqrt{2}$. For each $j$, $N_j^{(\epsilon/2,\sqrt{3}\epsilon/2)}$ deformation retracts onto $S_j$, and so up to diffeomorphism we may regard $E$ as
 \begin{center}
$E=E_1 \#_{\Phi} E_2\cong[(E_1 - N_1^{[0,\epsilon/\sqrt{2})}) \bigcup (E_2 - N_2^{[0,\epsilon/\sqrt{2})})] \;/ \sim$
\end{center}
where $\sim$ identifies the boundaries $S_j=E_j - N_j^{[0,\epsilon/\sqrt{2})}$ via the restriction map $$\Upsilon|_{S_1}:S_1 \to S_2.$$
On $S_j=F\times H_j$, we compute
\begin{eqnarray*} \label{eqn:Final_computation}
\nonumber (\Upsilon|_{S_1})^*(\sigma_2) &=&(\Upsilon|_{S_1})^*\left(dz+\sum_{k=1}^{n}r_k^2d\theta_k   +  \beta\right)
=\Phi_F^*\left(dz+\sum_{k=1}^{n}r_k^2d\theta_k\right)   + \beta \\
&=&
\begin{cases}
 \quad \; dz+\sum_{k=1}^{n}r_k^2d\theta_k   +  \beta & \textrm{if} \;\; $n$ \; \textrm{is odd}, \\
 -\left(dz+\sum_{k=1}^{n}r_k^2d\theta_k\right)   +  \beta& \textrm{if} \;\; $n$ \; \textrm{is even}.
\end{cases}
\end{eqnarray*}
This shows that if $n$ is odd, then $(\Upsilon|_{S_1})^*(\sigma_2)=\sigma_1$, i.e., $\Upsilon|_{S_1}$ is a contactomorphism, and so by gluing $\sigma_j$'s along $S_j$'s we obtain a contact form, say $\sigma$, on $E=E_1\#_\Phi E_2$. If $n$ is even, the above computation suggests that by switching the orientations on $B_2$ (and so on $E_2$) and assuming the (positive) contact form $\sigma_2$ on $\overline{E_2}$ is constructed using the positive contact form $-K_2\mu_2$ on $\overline{B_2}$, we can make $\Upsilon|_{S_1}$ a contactomorphism (because we will have $(\Upsilon|_{S_1})^*(\sigma_2)=\sigma_1$ for this case as well), and therefore, one can glue $\sigma_j$'s along $S_j$'s and obtain a contact form $\sigma$ on $E=E_1\#_\Phi \overline{E_2}$. \\

By construction the contact structure $\xi:=\textrm{Ker}(\sigma)$ on $E$ restricts to the original bundle contact structure on each piece $E-E_j$ (if $n$ is odd), or on each piece $E-E_1$ and $E-\overline{E_2}$ (if $n$ is even). Moreover, in both cases during the gluing process (along $S_j$'s) only the base coordinates (that is, the coordinates on $H_j$'s) are nontrivially identified, and so one can smoothly glue the maps $\pi_1,\pi_2$ together, and obtain a fiber bundle projection $\pi$ on $E$ with fibers $F$. On the other hand, the gluing map $\Upsilon|_{S_1}$ is identity on each fiber $F$ over $H_1$, and, therefore, the structure group $G$ of $\pi$ is contained in $\textrm{Exact}(F,\beta)$. \\

Next observe that at the base level, $\Upsilon|_{H_1}$ pulls back $dz+\sum_{k=1}^{n}r_k^2d\theta_k$ to itself (if $n$ is odd), and to $-(dz+\sum_{k=1}^{n}r_k^2d\theta_k)$ (if $n$ is even). Thus, constructing the total space $E$ upstairs corresponds to taking the contact connected sum of $(B_j,\mu_j)$'s (if $n$ is odd), or of $(B_1,\mu_1)$ with $(\overline{B_2},-\mu_2)$ (if $n$ is even) downstairs as claimed.\\

Finally, consider the reassignments $K_1=K_2=K$ where $K:=\textrm{Max}(K_1,K_2)$. Then using these new choices of $K_1,K_2$ we can carry out all the constructions above in the same way, and corresponding conclusion are still be correct. For instance, in the case that $n$ is odd, one can see for each $i$ that $K_i\mu_i$ and $K\mu_i$ define isotopic (constant isotopy) contact structures on $B_i$, and so, Theorem \ref{thm:isotopic_to_isotopic}, the bundle contact structures $\xi(K_i\mu_i),\xi(K\mu_i)$ on $E_i$ are isotopic. Hence, the corresponding contact structures $\xi$'s (kernels of $\sigma$'s) on $E=E_1\#_\Phi E_2$ constructed above are isotopic.\\

Now it is straight forward to check that when $n$ is odd (resp. $n$ is even) the contact structure $\xi=\textrm{Ker}(\sigma)$ on $E=E_1\#_\Phi E_2$ (resp. $E=E_1\#_\Phi \overline{E_2}$) is the bundle contact structure for $\pi$ associated to $\mu_1 \cup_{\Upsilon|_{H_1}} \mu_2$ (resp. $\mu_1 \cup_{\Upsilon|_{H_1}} -\mu_2$)
\end{proof}




\begin{thebibliography}{999999}




\bibitem{Geiges} H. Geiges, \emph{Constructions of contact manifolds}, Math. Proc. Cambridge Philos. Soc. 121 (1997), 455–464.

\bibitem{Ge} H. Geiges,
{\em An Introduction to Contact Topology}, Cambridge University Press, 2008.

\bibitem{Gompf} R. Gompf, \emph{A New Construction of Symplectic Manifolds}, Ann. of Math. (2) 142 (1995), no. 3, 527–595.



\bibitem{HW} B. Hajduk and R. Walczak, \emph{Constructions of Contact forms on Products and Piecewise Fibered Manifolds}, preprint (arXiv:1204.1692v3).

\bibitem{MW} J. McCarthy and J. Wolfson, \emph{Symplectic normal connect sum}, Topology 33 (1994), 729–764.

\bibitem{MS} D. McDuff and D. Salamon,
{\em Introduction to Symplectic Topology}, Oxford University Press, (1995).

\bibitem{S1} P. Seidel, {\em Vanishing cycles and mutation}, European Congress of Math., Vol. II (Barcelona, 2000), 65-85, Progr. Math., 202, Birkh\"auser, Basel, 2001.

\bibitem{S2} P. Seidel, {\em A long exact sequence for symplectic Floer cohomology}, Topology 42 (2003), p. 1003-1063. (arXiv:math/0105186v2)

\end{thebibliography}
\end{document}